\newtheorem{theorem}{Theorem}
\newtheorem{lemma}[theorem]{Lemma}
\newtheorem{remark}[theorem]{Remark}
\newcommand\R{{\mathbb R}}
\renewcommand\P{{\mathbb P}}
\newcommand\E{{\mathbb E}}
\newcommand{\Cov}{{\rm Cov}}
\newcommand{\Var}{{\rm Var}}
\providecommand{\keywords}[1]
{
  \small	
  \textbf{\textit{Keywords---}} #1
}
\begin{document}
\title{Necessary and sufficient conditions for the finiteness  of  the second moment of the measure of level sets }
\author{J-M. Aza\"{i}s\thanks{IMT, Universit\'{e} de Toulouse, 
Toulouse, France. Email: jean-marc.azais@math.univ-toulouse.fr}
\qquad
J. R. Le\'{o}n\thanks{IMERL, Facultad de Inegenier\'{\i}a
Universidad de la Rep\'{u}blica, Montevideo, Uruguay. Email: rlramos\,@fing.edu.uy and Universidad Central de Venezuela. Escuela de Matem\'atica.}}
\maketitle
\begin{abstract}
For a smooth  vectorial stationary Gaussian  random field $X:\Omega\times\R^d\to\R^d$, we give necessary and sufficient conditions to have  a finite second moment 
for the number of roots of $X(t)-u$. The results are obtained by using a method of proof inspired on the one obtained  by D. Geman for stationary Gaussian processes long time ago.  Afterwards the same method is applied to the number of critical points of a scalar random field and also to the level set of a vectorial process $X:\Omega\times\R^D\to\R^d$ with $D>d$.
\end{abstract}
\keywords{Level Sets, Kac-Rice formula, Moments, Random fields}\\
\textup{2000} \textit{Mathematics Subject Classification}: \textup{60G15, 60G60}

\section{Introduction}
In the 1940s three articles with apparently different orientations appeared in mathematical literature. Firstly was  Mark Kac's paper \cite{Kac:Kac} ``On the average number of real roots of a random algebraic equation" and secondly two papers written by S.O. Rice \cite{Rice1}, \cite{Rice2}``Mathematical analysis of random noise I and II".  In the work of Kac and in the second  of Rice the zeros of  Gaussian random functions were studied. In particular they established with precision a formula, known today as the Kac-Rice formula, which allows to compute the expectation of the number of  zeros (or crossings by   any level) of a Gaussian random function. In spite of the apparently separated that seem the works, M. Kac in the review of the article affirms that ``All these results (of Rice) can also be derived using the methods introduced by the reviewer (M. Kac)". 

After these two works an intense research activity has been developed. In particular, the interest in these subjects had a great impulse after the appearance of the book written by H. Cramer and M. R. Leadbetter \cite{C:L}. In this work, there is not only a general demonstration of the Kac-Rice formula for the number of crossings of a Gaussian processes, but also formulas for the factorial moments of this last random variable. An important fact to notice is that in the book  a sufficient condition for the second moment of the number of crossings of zero to be finite is established. Then a little time later D. Geman  in \cite{Ge} showed that this condition was also necessary.  This  condition is now known as `` the Geman condition". This result has been extended to  any level at \cite{Kratz:Leon}.

The theme gained a new impulse when appear in the eighties two books, the first one written by R. Adler \cite{Ad1} ``The geometry of random fields"  and the second one a Lecture Notes \cite{W1} written by M. Wschebor ``Surfaces al\'eatoires. Mesure g\'eom\'etrique des ensembles de niveau". Both books focus their study on crossings or geometric invariants of the level sets, for random fields having a multidimensional domain and taking scalar or vector values. The problems studied by Cramer \& Leadbetter were extended to this new context. In particular we must point out the Adler \& Hasofer's article \cite {Ad:Ha} in which conditions are established so that the number of stationary points for a Gaussian  field of $ X: \R ^ 2 \to \R $ have a second moment. It is important to observe that studying the stationary points of a scalar field leads to study the zeros of its gradient, which is a vector field. 

The twenty-first century saw two books appear \cite{Ad:Tay} and \cite{AW} that gave a new impetus to the subject. New fields of application of the formulas appeared in the literature and the area has become a large domain of research.  We can point out for instance the applications to the number of roots of random polynomial systems (algebraic or trigonometric) and also to the volume of nodal sets when the systems are rectangular \cite{Peccati}. Also Kac-Rice formulas are a basic tool to study the sets of zeros of random waves and  it has been much effort to prove or disprove Berry's conjectures \cite{berry1}, see \cite{Peccati} and the references therein. A field of applications where the formulas have been very useful is in random sea modeling, the Lund's School of probability has been very active in these matters, see for instance the paper \cite{Lindgren} and the references therein. In addition, the processes to which the crossings are studied can have their domain in a manifold of finite dimension  see \cite{let1}. A very interesting case of this last situation is the article \cite{Au:Ben} where the domain of the random filed is the sphere in  large dimension.

In the present paper we obtain necessary and sufficient conditions to have a finite second moment for the number of roots of $X(t)-u$, for a stationary, mean zero Gaussian field $X:\Omega\times\R^d\to\R^d$. The proofs of the main results are  rather simple using the case $d=1$ as inspiration.  Our results can be extended to the number of critical points of a stationary mean zero scalar Gaussian field. We must note that recently in \cite{estr:four} a sufficient condition for the critical points of a scalar field has finite second moment was given, however our method is rather different. Finally let us point us that as a bonus our method of proof allows  obtaining a very simple result for the volume of level sets for Gaussian fields $X:\Omega\times\R^D\to\R^d$ with $D>d.$ Under condition of stationarity and diferentiability, the second moment is always finite.

Suppose that we have a way to check easily  that the measure of the level set of a Gaussian field has finite second moment. Then it is ready to obtain an It\^o-Wiener expansion for this functional. Two  consequences of this representation are important to remark: firstly the asymptotic variance of the level functional can be computed and also the speed of the divergence of this quantity can be estimated, secondly the fourth moment theorem can be used to obtain diverse CLT. This has been done  some time ago in \cite{KL} and more recently in a lot of papers. We can cite by instance the article \cite{Peccati} where one can also consult some recent references.

The organisation of the
 paper is the following: in Section \ref{Geman1} we revisit the results of \cite{Kratz:Leon} in dimension 1. Section \ref{dd} studies  the number of points of  levels sets  for  a  random field $X: \R^d \to \R^d$, $d>1$. The subsection \ref{criti} is devoted to the study  of the number of critical points  of a random field $X: \R^d \to \R$. Section  \ref{Dd} studies the   measure of levels sets for  a  random field $X: \R^D\to \R^d$, $D>d$.  The proof of the different lemmas are given in the appendix. 
\label{dd}

\section{ Real valued process on the line, Geman's condition}\label{Geman1}
The results of this section are contained in the paper \cite{Kratz:Leon}.  However, we present a new proof as an introduction to the next section.

\noindent Consider a process $X: \R\to\R$ and assume
\begin{itemize}
\item It is Gaussian stationary, normalized this is:
$$\E(X(0))=0;\quad \Var(X(t))=1.$$
This last point is without loss of generality. 
\item The second spectral moment  $\lambda_2$  is positive and finite.
\end{itemize}
Let $N_u([0,T]):=\#\{t\in[0,T]:\, X(t)=u\}$ for a given level $u\in \R$. 
Moreover we define the covariance  $$r(\tau)=\E[X(0)X(\tau)].$$
 And let set  $$\sigma^2(\tau):=\Var(X'(0)|\; X(0)=X(\tau)=0)=\lambda_2-\frac{(r'(\tau))^2}{1-r^2(\tau)}.$$ 
In what follows $(Const)$ will denote a generic  positive constant, its value  can change from one occurence to another.\\ The relation  $x \leq (Const) y, y \leq (Const)x$ is denoted 
$ x\asymp y$. \medskip

The object of this section is to prove the next theorem:
\begin{theorem} \label{t:1}
The following statements are equivalent
\begin{enumerate}[label=(\alph*)]
\item $\E(N_u([0,T])^2)$ is finite for some $u$ and $ T$.
\item $\E(N_u([0,T])^2)$ is finite for all $u$ and all finite $T$.
\item  The intergral $\int \frac{\sigma^2(\tau)}{\tau}d\tau$  converges at zero.
\end{enumerate}
\end{theorem}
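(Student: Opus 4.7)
The plan is to apply the Kac--Rice formula for the second factorial moment of crossings (valid here since $X$ is smooth Gaussian stationary with $\lambda_2<\infty$) and to reduce, via stationarity and the change of variable $\tau=t-s$, to
\begin{equation*}
\E[N_u(N_u-1)] = 2\int_0^T (T-\tau)\,J_u(\tau)\,d\tau,\qquad J_u(\tau) := p_{X(0),X(\tau)}(u,u)\,\E\bigl[|X'(0)X'(\tau)|\bigm|X(0){=}X(\tau){=}u\bigr].
\end{equation*}
Since $\E(N_u)$ is finite by the first-moment Kac--Rice formula, (a) and (b) are each equivalent to the convergence of $\int_0^T J_u(\tau)\,d\tau$. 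The joint Gaussian covariance of $(X(0),X(\tau),X'(0),X'(\tau))$ is non-degenerate uniformly on $[\eps,T]$, so $J_u$ is continuous and bounded there and the entire issue is local at $\tau=0$. The standard expansion $1-r^2(\tau)\sim\lambda_2\tau^2$ yields $p_{X(0),X(\tau)}(u,u)\asymp 1/\tau$ (with constants depending on $u$), and a direct regression computation shows that conditionally on $X(0)=X(\tau)=u$ the pair $(X'(0),X'(\tau))$ is bivariate Gaussian with means $\mp r'(\tau)u/(1+r(\tau)) = O(|u|\tau)$ and common marginal variance precisely $\sigma^2(\tau)$.

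\textbf{Easy direction (c)$\Rightarrow$(b), and (b)$\Rightarrow$(a) is trivial.} By the conditional Cauchy--Schwarz inequality and the symmetry of the means,
\begin{equation*}
\E\bigl[|X'(0)X'(\tau)|\bigm|\cdot\bigr]\leq \E\bigl[X'(0)^2\bigm|\cdot\bigr] = \sigma^2(\tau) + \left(\frac{r'(\tau)u}{1+r(\tau)}\right)^{2} = \sigma^2(\tau)+O(u^2\tau^2).
\end{equation*}
Combined with $p_{X(0),X(\tau)}(u,u)\asymp 1/\tau$ this gives $J_u(\tau)\leq (Const)\bigl(\sigma^2(\tau)/\tau + u^2\tau\bigr)$ near the origin. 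The term $u^2\tau$ is integrable on $(0,T]$, so convergence of the Geman integral in (c) forces $\int_0^T J_u<\infty$.

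\textbf{Main direction (a)$\Rightarrow$(c).} Here one needs the reverse inequality $J_u(\tau)\geq c(u)\,\sigma^2(\tau)/\tau$, modulo an integrable remainder. The idea is to lower-bound the conditional expectation by restricting to the event $A_\tau:=\{|X'(0)|\wedge|X'(\tau)|\geq \sigma(\tau)/2\}$:
\begin{equation*}
\E[|X'(0)X'(\tau)|\mid\cdot]\geq \tfrac{1}{4}\sigma^2(\tau)\cdot\P(A_\tau\mid X(0){=}X(\tau){=}u),
\end{equation*}
and to show that the conditional probability stays bounded below by an absolute constant. In the regime $\sigma(\tau)\gtrsim |u|\tau$ the standardised conditional means $\mp m(\tau,u)/\sigma(\tau)$ are bounded, and the rescaled bivariate conditional Gaussian $(X'(0),X'(\tau))/\sigma(\tau)$ gives such a uniform lower bound irrespective of its (possibly degenerating) correlation. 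In the complementary regime $\sigma(\tau)\ll |u|\tau$, one has $\sigma^2(\tau)/\tau\leq (Const)\,u^2\tau$, which is integrable, so a divergence of the Geman integral can only come from the first regime and, by the above, propagates to a divergence of $\int_0^T J_u$, contradicting (a).

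\textbf{Principal obstacle.} The delicate step is the uniform lower bound on $\P(A_\tau\mid\cdot)$: as $\tau\to 0^+$ the conditional correlation between $X'(0)$ and $X'(\tau)$ tends to $1$, so the conditional bivariate Gaussian law collapses onto the diagonal; one has to verify that the event ``both coordinates at least $\sigma(\tau)/2$ in absolute value'' keeps positive probability in this limit. Because the degeneration is towards $|X'(0)|\approx |X'(\tau)|$, a one-dimensional Gaussian tail estimate centred at a bounded shift $m/\sigma$ suffices, and this is exactly the template that will be reused in the vector-valued setting of Section \ref{dd}.
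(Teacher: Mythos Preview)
Your argument is correct and follows the same Kac--Rice architecture as the paper: reduce to the second factorial moment, use stationarity to get a single integral in $\tau$, observe $p_{X(0),X(\tau)}(u,u)\asymp 1/\tau$, and analyse the conditional law of $(X'(0),X'(\tau))$. The upper bound via Cauchy--Schwarz is exactly what the paper records as an alternative in the remark after \eqref{e:jma1}. Where you genuinely diverge is in the lower bound. The paper establishes (Lemma~\ref{resultados}(e)) that the standardised conditional mean $\mu_{1,\tau,u}/\sigma(\tau)$ is \emph{always} bounded by $(Const)\,u$; this requires splitting off the sine--cosine case $\lambda_4=\lambda_2^2$ and, when $\lambda_4=+\infty$, a Fatou/l'H\^opital argument. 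With that in hand, Lemma~\ref{acotacion} (a compactness statement for $\E|Y_1Y_2|$ over bivariate Gaussians with bounded means and arbitrary correlation) gives the two-sided bound $\mathcal A(\tau,u)\asymp\sigma^2(\tau)$ directly. You instead sidestep Lemma~\ref{resultados}(e) entirely by the regime split: on $\{\sigma(\tau)\ge K|u|\tau\}$ the standardised means are bounded and your event $A_\tau$ (which is just a concrete instance of Lemma~\ref{acotacion}(b)) has probability bounded below; on the complement, $\sigma^2(\tau)/\tau\le K^2u^2\tau$ is integrable regardless, so divergence of the Geman integral cannot hide there. This is a legitimate and slightly more elementary route---it avoids any discussion of $\lambda_4$ and the separate sine--cosine case---at the cost of not producing the sharper fact that the mean/variance ratio is uniformly controlled, which is of independent interest and is what the paper reuses in Section~\ref{dd}. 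Your ``principal obstacle'' is handled in the paper by exactly the compactness argument you sketch (continuity of the relevant functional in $(m_1,m_2,\rho)$ on a compact set, positivity at every point including the degenerate $\rho=\pm1$).
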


{\bf Remark: } Integrating by parts  in (c)  we get the classical Geman's condition by using the following lemma, whose proof (as well as the proofs of all lemmas) is referred to the appendix.
\begin{lemma}\label{integrales}
There  is equivalence  between the convergence at zero of the two following integrals
$$
 \int \frac{\lambda_2+r''(\tau)}{\tau}d\tau \quad \mbox { and } \quad \int \frac{\sigma^2(\tau)}{\tau}d\tau.$$
\end{lemma}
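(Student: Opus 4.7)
My plan is to reduce both integrals to a common third integral by Fubini's theorem, using the fact that the relevant integrands are nonnegative near $\tau=0$. First I would record the basic asymptotics: since $\lambda_2<\infty$, the covariance $r$ is $C^2$ with $r(0)=1$, $r'(0)=0$, $r''(0)=-\lambda_2$, so Taylor expansion at $0$ gives $1-r^2(\tau)\asymp \tau^2$ and $-r'(\tau)\asymp |\tau|$ near $\tau=0$ (using crucially $\lambda_2>0$). The spectral representation
$$\lambda_2+r''(\tau)=\int \lambda^2\bigl(1-\cos(\tau\lambda)\bigr)\,d\mu(\lambda)\geq 0$$
ensures the integrand of the right-hand integral is nonnegative near the origin.

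Next I would introduce the auxiliary function $A(\tau):=\lambda_2\bigl(1-r^2(\tau)\bigr)-(r'(\tau))^2$, which is nonnegative since it equals $\sigma^2(\tau)$ times $1-r^2(\tau)$ (a conditional variance). From the asymptotics above,
$$\frac{\sigma^2(\tau)}{\tau}=\frac{A(\tau)}{\tau\bigl(1-r^2(\tau)\bigr)}\asymp \frac{A(\tau)}{\tau^3},$$
so the convergence at zero of $\int \sigma^2(\tau)/\tau\,d\tau$ is equivalent to that of $\int A(\tau)/\tau^3\,d\tau$.

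The key computation is then $A'(\tau)=-2r'(\tau)\bigl[\lambda_2+r''(\tau)\bigr]-2\lambda_2 r'(\tau)\bigl(r(\tau)-1\bigr)$. The second piece integrates explicitly to $-\lambda_2(r(\tau)-1)^2=O(\tau^4)$, whose quotient by $\tau^3$ is integrable. For the main piece, I would use $-r'(s)\asymp s$ to obtain
$$\int_0^{\varepsilon}\frac{A(\tau)}{\tau^3}\,d\tau \;\asymp\; \int_0^{\varepsilon}\frac{1}{\tau^3}\int_0^{\tau} s\bigl[\lambda_2+r''(s)\bigr]\,ds\,d\tau + \textnormal{finite},$$
and then apply Fubini (permitted by nonnegativity of $\lambda_2+r''$) to rewrite the double integral as
$$\int_0^{\varepsilon} s\bigl[\lambda_2+r''(s)\bigr]\!\!\int_s^{\varepsilon}\!\frac{d\tau}{\tau^3}\,ds=\frac{1}{2}\int_0^{\varepsilon}\frac{\lambda_2+r''(s)}{s}\,ds-\frac{1}{2\varepsilon^2}\int_0^{\varepsilon} s\bigl[\lambda_2+r''(s)\bigr]\,ds.$$
Since $\lambda_2+r''$ is bounded on the compact interval, the subtracted term is finite, so the two integrals in the statement have the same convergence behavior at zero.

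The main obstacle is simply bookkeeping: one must ensure that each $\asymp$ step can be carried out uniformly on a common small interval $[0,\varepsilon]$, and that the nonnegativity of both $A(\tau)$ and $\lambda_2+r''(s)$ (needed to apply Fubini without dominated convergence machinery) is correctly justified from the spectral representation and the conditional variance interpretation. Once those positivity facts and the two basic equivalences $1-r^2(\tau)\asymp \tau^2$, $-r'(\tau)\asymp \tau$ are in place, the remainder is a routine Fubini calculation.
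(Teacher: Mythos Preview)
Your proposal is correct and follows essentially the same route as the paper: both reduce $\sigma^2(\tau)/\tau$ to $A(\tau)/\tau^3$ via $1-r^2(\tau)\asymp\tau^2$, use $A'(\tau)=-2r'(\tau)\bigl[\lambda_2+r''(\tau)\bigr]-2\lambda_2 r'(\tau)\bigl(r(\tau)-1\bigr)$ with the same splitting, and exploit $-r'(\tau)/\tau\to\lambda_2$ to compare the main piece with $\int(\lambda_2+r'')/\tau\,d\tau$. The only cosmetic difference is that the paper carries this out by a single integration by parts, whereas you write $A(\tau)=\int_0^\tau A'(s)\,ds$ and apply Fubini; your explicit justification of the nonnegativity of $\lambda_2+r''$ via the spectral representation is a small bonus the paper leaves implicit.
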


Before the proof of the theorem we need some notation and two lemmas.
\begin{lemma} \label{resultados}  For $\tau$ sufficiently small  we  set the following definitions and we have the following   relations.

\begin{enumerate}[label=(\alph*)]
\item $\mu_{1,\tau,u}:=\E(X(\tau)|\, X(0)=X(\tau)=u)=\frac{r'(\tau)u}{1+r(\tau)}$.
\item $\mu_{2,\tau,u}:=\E(X(0)|\, X(0)=X(\tau)=u)=-\mu_{1,\tau,u}.$
\item Recall that   $\sigma^2(\tau)=\Var(X(0)|\, X(0),\, X(\tau))=\lambda_2-\frac{(r'(\tau))^2}{1-r^2(\tau)}.$ 
\item $\det(\Var(X(0);X(\tau))=1-r^2(\tau)\asymp \tau^2$
\item   if the fourth spectral  moment  $\lambda_4$  satisfies  $ \lambda_2^2 < \lambda_4 \leq  + \infty$, then \\ $\frac{|\mu_{1,\tau,u}|}{\sigma(\tau)}\leq (Const) u$ .
\end{enumerate}
\end{lemma}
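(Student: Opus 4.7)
Items (a), (b), (c) are standard Gaussian regression computations hinging on stationarity. For (a), stationarity gives the two key cross-covariances $\Cov(X'(\tau),X(0))=r'(\tau)$ and $\Cov(X'(\tau),X(\tau))=\frac{1}{2}\frac{d}{d\tau}\Var(X(\tau))=0$. The regression formula then reads
\[
\mu_{1,\tau,u}=(r'(\tau),0)\,\Sigma^{-1}\binom{u}{u},\qquad\Sigma=\begin{pmatrix}1 & r(\tau)\\ r(\tau) & 1\end{pmatrix},
\]
and factoring $1-r^2=(1-r)(1+r)$ simplifies $\Sigma^{-1}\binom{u}{u}$ to $\frac{u}{1+r(\tau)}\binom{1}{1}$, yielding the stated formula. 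For (b), the analogous computation for $X'(0)$, whose cross-covariance vector with $(X(0),X(\tau))$ is $(0,-r'(\tau))$, evaluates to $-\mu_{1,\tau,u}$; equivalently, the time-reversal symmetry $X(-\cdot)\stackrel{d}{=}X(\cdot)$ delivers it without recomputation. For (c), the standard formula
\[
\Var(X'(0)\mid X(0),X(\tau))=\lambda_2-(0,-r'(\tau))\,\Sigma^{-1}(0,-r'(\tau))^{T}
\]
simplifies through the same inverse to $\lambda_2-\frac{(r'(\tau))^2}{1-r^2(\tau)}$.

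Item (d) follows from the Taylor expansion $r(\tau)=1-\frac{\lambda_2}{2}\tau^2+o(\tau^2)$ (using $r'(0)=0$ and $r''(0)=-\lambda_2$), so that $1-r^2(\tau)=(1-r(\tau))(1+r(\tau))=\lambda_2\tau^2(1+o(1))$, giving the two-sided comparison with $\tau^2$.

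For (e), the upper bound on the numerator is straightforward: $|r'(\tau)|\le C|\tau|$ by the mean value theorem and boundedness of $r''$ near $0$, together with $1+r(\tau)\to 2$, yield $|\mu_{1,\tau,u}|\le C|u||\tau|$. The substantive step is the matching lower bound $\sigma(\tau)\ge c|\tau|$. When $\lambda_4<\infty$, pushing the Taylor expansion of $r$ one further order produces
\[
\sigma^2(\tau)=\frac{\lambda_4-\lambda_2^2}{4}\,\tau^2+o(\tau^2),
\]
whose leading coefficient is positive precisely under the hypothesis $\lambda_4>\lambda_2^2$. The case $\lambda_4=+\infty$ is the main obstacle; I would handle it by spectral truncation, writing $X=X^{(M)}+Z^{(M)}$ with independent summands whose spectral measures are $F$ restricted to $|\lambda|\le M$ and $|\lambda|>M$. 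Because the variance of $X'(0)-aX(0)-bX(\tau)$ splits additively across the two independent pieces, minimizing over $(a,b)$ gives $\sigma^2(\tau)\ge\sigma_M^2(\tau)$, and applying the already-proved finite-$\lambda_4$ case to $X^{(M)}$ (noting $\lambda_4^{(M)}\nearrow+\infty$ while $\lambda_2^{(M)}\to\lambda_2$, so $\lambda_4^{(M)}>(\lambda_2^{(M)})^2$ for $M$ large) gives the required $\sigma(\tau)\gtrsim|\tau|$ for $|\tau|$ sufficiently small. Combining the two estimates yields $|\mu_{1,\tau,u}|/\sigma(\tau)\le C|u|$.
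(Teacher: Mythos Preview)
Your argument is correct. For items (a)--(d) and for the finite-$\lambda_4$ branch of (e) you proceed exactly as the paper does: regression formulas plus the Taylor expansion $r(\tau)=1-\tfrac{\lambda_2}{2}\tau^2+\tfrac{\lambda_4}{24}\tau^4+o(\tau^4)$, arriving at $\sigma^2(\tau)\sim c(\lambda_4-\lambda_2^2)\tau^2$ and hence at the same limit $\big(\tfrac{r'u}{(1+r)\sigma}\big)^2\to\tfrac{\lambda_2^2u^2}{\lambda_4-\lambda_2^2}$ that the paper records.

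The genuine difference is in the case $\lambda_4=+\infty$. The paper works directly with the spectral representation: Fatou's lemma gives
\[
\liminf_{\tau\to0}\frac{r''(\tau)-r''(0)}{\tau^2}\ \ge\ \int\lambda^4\,d\mu(\lambda)=+\infty,
\]
and then l'H\^opital applied to $\big(\lambda_2(1-r^2)-r'^{\,2}\big)/\tau^4$ shows this quantity diverges, so that in fact $r'(\tau)/\sigma(\tau)\to 0$. Your route is instead structural: you split $X=X^{(M)}+Z^{(M)}$ by spectral truncation, use the variational characterisation $\sigma^2(\tau)=\min_{a,b}\Var\big(X'(0)-aX(0)-bX(\tau)\big)$ and independence of the two pieces to get $\sigma^2(\tau)\ge\sigma_M^2(\tau)$, and then invoke the already-established finite-$\lambda_4$ expansion for $X^{(M)}$ once $M$ is large enough that $\lambda_0^{(M)}\lambda_4^{(M)}>(\lambda_2^{(M)})^2$ (strict Cauchy--Schwarz on the truncated spectral measure). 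This is a legitimate and rather clean monotonicity argument; it yields only boundedness of $|\mu_{1,\tau,u}|/\sigma(\tau)$ rather than convergence to $0$, but that is all the lemma asserts. The paper's computation buys the sharper asymptotic; yours buys a proof that avoids l'H\^opital and transfers more readily to other settings where a spectral decomposition into independent summands is available.
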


\begin{lemma} \label{acotacion}Assume that $|m_1|, |m_2|\le K$ for some constant $K$ and that $(Y_1;Y_2)\stackrel{\mathcal L}= N\big((m_1;m_2),\begin{pmatrix}1&\rho\\\rho&1\end{pmatrix}\big)$. Then 
$\E|Y_1Y_2|\asymp 1.$ Where the  two constants implied in the symbol $\asymp$ depend on $K$.
\end{lemma}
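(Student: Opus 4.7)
The claim packages two bounds: $\E|Y_1 Y_2|\le c_+(K)$ and $\E|Y_1 Y_2|\ge c_-(K)$, both uniform in $\rho\in[-1,1]$ and in $(m_1,m_2)\in[-K,K]^2$.

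The upper bound is immediate from Cauchy--Schwarz, since
\[
\E|Y_1Y_2|\le \sqrt{\E Y_1^2\,\E Y_2^2}=\sqrt{(1+m_1^2)(1+m_2^2)}\le 1+K^2 .
\]

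For the lower bound, the plan is a compactness argument. I would fix once and for all the coupling $Y_1=m_1+Z_1$, $Y_2=m_2+\rho Z_1+\sqrt{1-\rho^2}\,Z_2$, with $(Z_1,Z_2)$ standard i.i.d.\ normal; this simultaneously realises the law of $(Y_1,Y_2)$ for every triple $(\rho,m_1,m_2)\in[-1,1]\times[-K,K]^2$, including the degenerate boundary cases $\rho=\pm1$. Then set $F(\rho,m_1,m_2):=\E|Y_1Y_2|$. The strategy is to show (i) $F$ is continuous on this compact set, (ii) $F>0$ pointwise; these together yield a strictly positive minimum.

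For continuity, the integrand $|Y_1Y_2|$ is a continuous function of $(\rho,m_1,m_2)$ for each realisation of $(Z_1,Z_2)$, and $\E(Y_1Y_2)^2$ is controlled by $\E Y_1^4\,\E Y_2^4$, which is bounded by a constant depending only on $K$; uniform integrability and dominated convergence then give continuity. For strict positivity: when $|\rho|<1$, $(Y_1,Y_2)$ admits a positive joint Lebesgue density, so $\{Y_1Y_2=0\}$ has probability zero and $\E|Y_1Y_2|>0$. At $\rho=1$ (resp. $\rho=-1$) the product reduces to $(m_1+Z_1)(m_2+Z_1)$ (resp. $(m_1+Z_1)(m_2-Z_1)$), a nonconstant quadratic in $Z_1$ whose zero set has (at most) two points, hence Gaussian measure zero; thus $F>0$ on the boundary as well. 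Compactness closes the argument.

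The only delicate point is continuity of $F$ at the singular values $\rho=\pm1$: this is where the covariance matrix degenerates and one might worry about the behaviour of the density. The fix is exactly the coupling written above, which trades the density picture for an a.s.\ pointwise convergence statement that combines painlessly with the uniform $L^2$ bound on $Y_1Y_2$. Beyond that, the proof is essentially a one-page continuous-function-on-a-compact argument.
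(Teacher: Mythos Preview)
Your proof is correct. Both your argument and the paper's rest on the same compactness framework: realise the family via the coupling $Y_2-m_2=\rho(Y_1-m_1)+\sqrt{1-\rho^2}\,Z$, observe that $\E|Y_1Y_2|$ is continuous in $(m_1,m_2,\rho)$ on the compact $[-K,K]^2\times[-1,1]$, and deduce both bounds once strict positivity is established. The differences lie in the two sub-steps. For the upper bound you use Cauchy--Schwarz, which is cleaner and yields the explicit constant $1+K^2$; the paper instead writes out the expansion of $Y_1Y_2$ and simply invokes continuity and compactness. For strict positivity, the paper computes $\Var(Y_1Y_2)\ge 1+2\rho^2\ge 1$ via an It\^o--Wiener chaos decomposition together with the Mehler formula, concluding that $Y_1Y_2$ is never a.s.\ constant; your direct argument (positive joint density when $|\rho|<1$, and at most two zeros of a nonconstant quadratic in $Z_1$ when $\rho=\pm1$) is more elementary and avoids the chaos machinery entirely. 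Both routes reach the same conclusion; yours is the more self-contained, while the paper's variance bound has the minor advantage of giving a quantitative lower estimate on $\Var(Y_1Y_2)$ uniform in all parameters.
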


\begin{proof}[Proof  of the Theorem]
First we have to consider  the particular case $\lambda_4 = \lambda_2^2$. This corresponds to the Sine-Cosine process: $X(t) = \xi_1 \sin(wt) + \xi_2 \cos(wt)$ where $ \xi_1,\xi_2$ are independent standard normals. In this case  a direct calculation shows that (a)-(c) hold true. 

We consider now the other cases assuming that $\lambda_2^2<\lambda_4$. We start from (c): we assume that 
$$ \int_0^T \frac{\sigma^2(\tau)}{\tau}d\tau <+\infty  \quad\mbox{ with }T \mbox{ sufficiently small.} 
$$
The expectation of the number of crossings is finite because the second spectral moment  is, see \cite{C:L}. Thus it is enough to work with the second factorial moment.  The Kac-Rice formula for this quantity \cite{C:L} writes
\begin{align}
 \E &(N_u([0,T])(N_u([0,T])-1))= \notag \\
\
&\frac1\pi \int_0^T(T-\tau)\E[|X'(0)||X'(\tau)|\,|X(0)=X(\tau)=u)\frac{e^{-\frac {u^2}{1+r}}}{\sqrt{1-r^2}} d\tau. \notag
\\
\label{KR}
& \leq (Const) \int_0^T    \E[| \frac{X'(0)}{\sigma(\tau)}| | \frac{X'(\tau)}{\sigma(\tau)}|\, \Big|X(0)=X(\tau)=u) \frac{ \sigma^2(\tau) }{\tau^2} d\tau,
\end{align}
using Lemma \ref{resultados}  (d).   
By Lemma  \ref{resultados} (e), $\frac{X'(0)}{\sigma(\tau}$ and $\frac{X'(\tau)}{\sigma(\tau)}$ have a bounded  conditional mean, then applying now Lemma \ref{acotacion}:
\begin{equation} \label{e:jma1}
\E (N_u([0,T])(N_u([0,T])-1) \leq (Const) \int_0^T   \frac{ \sigma^2(\tau) }{\tau^2} d\tau.
\end{equation}
This give the finiteness of the second moment  form $T$ sufficiently small. By  the Minkowsky inequality  it is also the case for every $T$ giving (b). 

 In the other direction we start from (a)  with $u=0$ and $T $ sufficiently small (which is weaker than (b)) and we prove (c) .
 
 Again  we can consider the second factorial moment  and apply  the Kac-Rice  formula to get that \\
\\
  $ \displaystyle
\E (N_u([0,T])(N_u([0,T])-1)$ $$\geq (Const) \int_0^{T/2}    \E\Big(| \frac{X'(0)}{\sigma(\tau)}| | \frac{X'(\tau)}{\sigma(\tau)}|\, \Big|X(0)=X(\tau)=u \Big)      \frac{ \sigma^2(\tau) }{\tau^2} d\tau.
$$ 
It suffices to apply Lemma \ref{acotacion} in the  other direction. 
\end{proof}
\begin{remark}  We can also obtain \eqref{e:jma1} with an explicit constant by use of the Cauchy-Schwarz inequality. 
\end{remark}
%
%
\section{Random fields $\R^d \to \R^d$, $d>1$}
\label{dd}
%
 \subsection{Position of the problem}
 
 Let us consider a random field $X: \R^d\to\R^d$. We assume  $(H_1)$:
\begin{itemize}
\item The field is Gaussian and stationary and has a continuous derivative.
\item The distribution of $X(0)$ (respectively $X'(0)$) is non degenerate (N.D.).
\end{itemize}
By a rescaling in space we can assume without loss of generality that
$$\E[X(t)]=0\quad \mbox{ and } \quad \Var(X(t))=I_d,$$ where $\Var$ denotes  for us the variance-covariance matrix.
We keep the notation $\Cov$  for the  matrix
$$
\Cov(X,Y) := \E\Big( \big(X-\E(X)\big) \big(Y-\E(Y)\big)^\top\Big).
$$
 We also define the following  additional hypothesis
%
 \begin{equation} \label{H2}
 \tag{$H_2$}\mbox{The coordinates }X_i \mbox{ of }X \mbox{  are independent and isotrope} 
\end{equation}

We define 
\begin{align*}
\sigma^2_{i,\lambda}(r) &:=    \Var\big(X'_{i\lambda}\big |\, X(0), X(\lambda r)\big),\\
\sigma^2_{max} (r) &:= \max_{i =1,\ldots,d}\max_{\lambda\in \mathbb S^{d-1}}   \sigma^2_{i,\lambda}(r), 
 \end{align*}
 where $X'_{i\lambda}$ denotes the derivative of $X_i$ in the direction $\lambda \in  \mathbb S^{d-1}$.

 \subsection{Zero level}\label{zerolevel}
We set $N(0,S)$ the number of roots of the field $X(\cdot)$  on some compact set $S$. The following result is new as well as all that follows.
 
 \begin{theorem}\label{t:jma12} Under $(H1)$,
  if $$
  \int \frac{\sigma^2_{max} (r)}{r} dr  \mbox{ converges at } 0,
 $$
 then for all compact $S \subset  \R^d $ : $\E\big((N(0,S))^2\big)$ is finite. 
 \end{theorem}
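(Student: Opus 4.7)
The plan is to adapt the one-dimensional argument of Theorem \ref{t:1}. I would first invoke the Kac--Rice formula for the second factorial moment,
\[
\E\bigl(N(0,S)(N(0,S)-1)\bigr)
= \int_S\!\int_S \E\bigl[|\det X'(t)\,\det X'(s)|\mid X(t)=X(s)=0\bigr]\, p_{X(t),X(s)}(0,0)\, dt\, ds;
\]
the first moment is finite because $X'(0)$ is non-degenerate. By stationarity the integrand depends only on $h=s-t$, and I would pass to polar coordinates $h=r\lambda$ with $r>0$ and $\lambda\in\mathbb S^{d-1}$, whose Jacobian contributes $r^{d-1}$. The region $r\ge r_0$ produces a finite contribution by standard arguments (density and conditional Jacobian are uniformly bounded there), so what matters is the small-$r$ regime.

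Next I would establish the two ingredients that generalize Lemma \ref{resultados}(d)--(e) to dimension $d$. For the joint density, the Taylor expansion $X(r\lambda)-X(0)=r\,\partial_\lambda X(0)+O(r^2)$ together with non-degeneracy of $X'(0)$ yields $\det\Cov\bigl(X(0),X(r\lambda)\bigr)\asymp r^{2d}$ uniformly in $\lambda$, so $p_{X(0),X(r\lambda)}(0,0)\le (Const)\,r^{-d}$. For the conditional means, the specialization $u=0$ is a windfall: every linear functional of $X'(0)$ and $X'(r\lambda)$ has zero conditional mean under the conditioning $X(0)=X(r\lambda)=0$, so only second moments enter.

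The core step is to bound $\E\bigl[|\det X'(0)\,\det X'(r\lambda)|\mid X(0)=X(r\lambda)=0\bigr]$. At each point I would pick an orthonormal basis of $\R^d$ whose first vector is $\lambda$. In this basis only the $\lambda$-column of the Jacobian is strongly constrained by the conditioning: each entry $\partial_\lambda X_i$ has conditional variance $\sigma_{i,\lambda}^2(r)\le \sigma^2_{max}(r)$, while the remaining $d-1$ columns consist of derivatives transverse to $\lambda$, whose conditional variances stay bounded above and below as $r\to 0$. Laplace-expanding both determinants along the $\lambda$-column produces at most $d^2$ terms of the form $|\partial_\lambda X_i(0)|\,|\partial_\lambda X_j(r\lambda)|\,|M_i(0)|\,|M_j(r\lambda)|$, where the cofactors $M_\bullet$ involve only transverse derivatives. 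A Cauchy--Schwarz splitting controls the cofactor piece by $O(1)$ and bounds the $\lambda$-derivative piece, a centered bivariate Gaussian with variances at most $\sigma^2_{max}(r)$, by $O(\sigma^2_{max}(r))$. Thus the full conditional expectation is $\le (Const)\,\sigma^2_{max}(r)$.

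Combining the three estimates, the integrand near $r=0$ is of order $\sigma^2_{max}(r)\,r^{-d}\,r^{d-1}=\sigma^2_{max}(r)/r$, and the hypothesis makes $\int_0^{r_0}\sigma^2_{max}(r)/r\,dr$ finite. The main obstacle will be the uniformity in $\lambda$ of the conditional covariance computation above: one must verify that the transverse derivatives keep conditional variances uniformly bounded as $r\to 0$ and $\lambda$ varies over the sphere, and that the antisymmetric first-order Taylor coefficient of the cross-covariance matrix of $X$ does not spoil the $r^{2d}$ asymptotics of $\det\Cov(X(0),X(r\lambda))$. The latter reduces to observing that the relevant leading-order matrix is the positive-definite conditional covariance $\Var(\partial_\lambda X(0)\mid X(0))$, which is uniformly non-degenerate in $\lambda$ by compactness of $\mathbb S^{d-1}$ and hypothesis $(H_1)$. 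Once this uniform regression is in hand, the determinant expansion extracts exactly two small factors of $\sigma_{max}(r)$ and the Geman-type integral emerges just as in the one-dimensional case.
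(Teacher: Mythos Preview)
Your proposal is correct and follows essentially the same architecture as the paper: Kac--Rice for the second factorial moment, stationarity plus polar coordinates, the density estimate $p_{X(0),X(r\lambda)}(0,0)\asymp r^{-d}$ via Lemma~\ref{l:gos}, and a bound $\E_\mathcal{C}[|\det X'(0)\det X'(r\lambda)|]\le (Const)\,\sigma^2_{max}(r)$ that reduces the integral to $\int \sigma^2_{max}(r)/r\,dr$.

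The only tactical difference is in how you extract the factor $\sigma^2_{max}(r)$ from the conditional determinant expectation. The paper first applies Cauchy--Schwarz and symmetry to reduce to a single point, $\E_\mathcal{C}[\det(X'(0)^\top X'(0))]$, and then uses the Hadamard-type inequality \eqref{zaza} to bound this by $\E_\mathcal{C}\bigl[\prod_j\|\bar X'_{:j}\|^2\bigr]$; the $j=1$ (i.e.\ $\lambda$-direction) factor contributes $\sigma^2_{max}(r)$ via contractivity, the others are $O(1)$. You instead Laplace-expand both determinants along the $\lambda$-column and handle the $d^2$ cross-terms by Cauchy--Schwarz. Both routes work; the paper's symmetrization avoids the cross-point cofactor bookkeeping at the cost of one extra inequality, while your expansion keeps closer to the $d=1$ template of bounding $\E|X'(0)X'(\tau)|$ directly. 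Either way the key observation is the same: only the derivative in the direction $\lambda$ has small conditional variance, and exactly one such factor appears in each determinant.
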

 The proof of the Theorem uses the following lemma.
   \begin{lemma}  \label{l:gos}
   Let $T,(Z_n)_n$ be in the same Gaussian space. Assume that $Z_n\to Z$ a.s. or in probability or in $\mathbb L^2(\Omega)$ and the random variable $Z$ is (N.D.). Then
 \begin{align*}
 \forall z,\, &\E(T|\, Z_n=z)\to\E(T|\, Z=z), \\
 &\Var(T|Z_n) \to \Var(T|Z).
 \end{align*}
 \end{lemma}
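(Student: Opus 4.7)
The plan is to reduce the lemma to elementary linear algebra by invoking the explicit Gaussian regression formulas.

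\emph{Step 1 (Reduction to $\mathbb L^2$-convergence).} All three convergence modes coincide on a Gaussian space. Indeed, $Z_n-Z$ is a centered Gaussian (vector-valued) variable and converges to $0$ almost surely or in probability; its characteristic function then converges pointwise to $1$, which for a Gaussian forces $\Var(Z_n-Z)\to 0$. Hence one may assume without loss of generality that $Z_n\to Z$ in $\mathbb L^2(\Omega)$.

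\emph{Step 2 (Gaussian regression formulas).} Since $(T,Z_n)$ is jointly Gaussian, the classical Gaussian conditioning identities read
$$
\E(T\mid Z_n=z)=\E(T)+\Cov(T,Z_n)\,\Var(Z_n)^{-1}\bigl(z-\E(Z_n)\bigr),
$$
$$
\Var(T\mid Z_n)=\Var(T)-\Cov(T,Z_n)\,\Var(Z_n)^{-1}\Cov(Z_n,T),
$$
valid as soon as $\Var(Z_n)$ is invertible, and likewise for $Z$ in the limit. Note that both conditional variances are deterministic in the Gaussian setting, which is why the second convergence is of numerical quantities.

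\emph{Step 3 (Convergence of the ingredients and conclusion).} The $\mathbb L^2$-convergence of $Z_n$ to $Z$ gives at once $\E(Z_n)\to\E(Z)$, and, applying Cauchy-Schwarz componentwise to the bilinear expressions for $\Cov$ and $\Var$, one obtains $\Cov(T,Z_n)\to\Cov(T,Z)$ and $\Var(Z_n)\to\Var(Z)$. The non-degeneracy hypothesis on $Z$ is precisely the statement that $\Var(Z)$ is invertible; since the invertible matrices form an open set on which $A\mapsto A^{-1}$ is continuous, $\Var(Z_n)$ is invertible for all sufficiently large $n$ and $\Var(Z_n)^{-1}\to\Var(Z)^{-1}$. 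Plugging these convergences into the formulas of Step 2 delivers both asserted limits.

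The only genuinely conceptual input is the reduction in Step 1, which rests on the equivalence of all $L^p$-norms (and of convergence in probability) for centered Gaussians; once this is granted, the rest is continuity of matrix inversion on the open set of invertible matrices, so no serious obstacle is anticipated.
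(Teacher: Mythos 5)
Your proof is correct and follows essentially the same route as the paper's: write the conditional expectation and variance via the Gaussian regression formulas, note that non-degeneracy of $Z$ makes $\Var(Z_n)$ invertible for large $n$, and conclude by continuity of matrix inversion. Your Step 1 (showing the three convergence modes coincide for Gaussian vectors, hence $\Var(Z_n)\to\Var(Z)$) merely makes explicit a point the paper leaves implicit, so there is no substantive difference.
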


 \begin{proof}[Proof of Theorem \ref{t:jma12}]

 Set
 $$\mathcal C = \{ X(0) = X(t) =0\}, $$
and  let $E_\mathcal C $ denotes the expectation conditional to $\mathcal C$.

We consider the following quantity
 \begin{equation} \label{atu}
 \mathcal A(t,0)=\E_\mathcal C\big(|\det X'(0)\det X'(t)|).
\end{equation}
By applying the Cauchy-Schwarz inequality  and by symmetry  of the roles of $0$ and $t$:
$$
\mathcal A(t,0) \leq \E_\mathcal C \big( \det((X'(0))^\top X'(0) )\big),
$$
We   define the Jacobian the matrix $X'(0)$ by 
$X'_{ij}(0) =  \frac{\partial X_i}{ \partial t_j} $. 

We perform a change of  basis  so that $ t = r e_1=|t| e_1$ where $e_1$ is  the first vector  of the new  basis. We denote by $ \bar X$  the expression of $X$ in this basis. Let $ \bar X'_{:j} $ denote the $j$th column of $ \bar X'$. Using Gram  representation of  the semidefinite  positive matrix $M=(M_{ij})$, we know that  
\begin{equation} \label{zaza}
  \det(M) \leq  M_{1,1} \ldots M_{d,d}.
  \end{equation}
  This gives 
  \begin{equation} \label{som}
  \mathcal A(t,0) \leq   \E_\mathcal C \big( \| \bar X'_{:1}\|^2 \ldots  \| \bar X'_{:d}\|^2\big)  
  = \sum_{ 1\leq i_1,\ldots,i_d \leq d} \E_\mathcal C( (X'_{i_1,1})^2 \ldots  (X'_{i_d,d})^2).
  \end{equation}
  Because the conditional  expectation is contractive, for $j>1$,
  \begin{equation} \label{sc1}
   \E_\mathcal C \big(  (\bar X'_{i_j,j})^2 \big) \leq  \E\big( ( \bar X'_{i_j,j})^2 \big)  \leq (Const).
   \end{equation}
  In addition 
   \begin{equation} \label{sc2}
   \E_\mathcal C \big(  (\bar X'_{i_1,1})^2 \big) \leq  \sigma^2_{max} (r)  .
   \end{equation}

   If we consider a term of \eqref{som}, we can apply Cauchy-Schwarz inequality to get that it is bounded by
   $$
  \big( E_\mathcal C( (X'_{i_1,1})^4\big)^{1/2} \big( E_\mathcal C( (X'_{i_2,2})^4\ldots  (X'_{i_d,d})^4)\big)^{1/2} .
 $$ 
 Using \eqref{sc1} and  \eqref{sc2}, we get that this term is bounded by 
 $$
   (Const) \sigma^2_{max}(r). 
  $$
  As a consequence  we get the same bound for the whole sum.  \medskip
  
 We now study  the joint density  $$p_{X(0),X(t)} (0,0) = (Const) \big( \det \Var(X(0),X(t))\big) ^{-\frac 1 2} .$$ 
  Using the fact that a determinant is invariant  by adding to  some row (or column) a linear combination of the others rows (or columns) we get
  $$ 
  \det (\Var(X(0),X(t))) = \det (\Var(X(0),X(t) -X(0))).
  $$
  Using Lemma \ref{l:gos}. 
 \begin{equation}\label{density}
   p_{X(0),X(t)} (0,0)   \simeq  (Const)  r^{-d} ( \det \Var(X(0),X_\lambda'(0))\big) ^{-\frac 1 2} 
  \asymp r^{-d} ,
  \end{equation}
  where $\lambda:= t/ \|t\|$.

  We are now able to apply  the Kac-Rice formula see, for example, \cite{AW}, Theorem 6.3. As in the case $d=1$  we can limit our attention  to the second factorial moment. We have
 \begin{align*}
 \E &(N(0,S)(N(0,S)-1) \notag \\
=& \int_{S^2}  \mathcal A(t-s,0) P_{X(s),X(t)} (0,0) ds dt 
\le (Const) |S| \int_{S}    \sigma^2_{max}(t) \|t\|^{-d}  dt, 
\end{align*}
where $|S|$ is the Lebesgue measure of $S$. 
Passing  in polar coordinates, including  $S$ in a centered ball with  radius $a$, we get  that the term above is bounded  by
 $$
(Const) \int _0^a\ r^{d-1} r^{-d}  \sigma^2_{max}(r)  d r = (Const)  \int _0^a  \frac{  \sigma^2_{max}(r)} {r}   d r.
$$ 
  \end{proof}

 \subsection{General level}
 
 In this section we assume $(H_1)$ and $(H_2)$. Note that
$ \sigma^2_{i,\lambda} (r)$ depends no more on $\lambda$.  We denote by 
$ \sigma^2_{i} (r)$ its value.  We have 
 $ \sigma^2_{max} (r) = \max_{i =1,\ldots,d}\sigma^2_{i}(r)$.
 
   Our result is the following

  \begin{theorem}\label{t:jma} Under the hypotheses above \\
 $\bullet$  If $$
  \int \frac{ \sigma^2_{max} (r)}{r} dr  \mbox{ converges at } 0,
 $$
 then for all compact $S \subset  \R^d $  an all $ u \in \R^d $: $\E\big((N(u,S))^2\big)$ is finite. 
 
  $\bullet$    
  If  $\E\big((N(u,S))^2\big)$ is finite  for some $u$ and some compact $S$  with non-empty interior, then 
 $$
  \int \frac{ \sigma^2_{max} (r)}{r} dr  \mbox{ converges at } 0.
 $$
\end{theorem}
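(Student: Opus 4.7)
My plan is to adapt the proof of Theorem \ref{t:jma12} to the general level $u$, replacing the conditioning set by $\mathcal{C}_u = \{X(0)=X(t)=u\}$ and defining
\[
\mathcal{A}(t,u) := \E_{\mathcal{C}_u}\bigl(|\det X'(0)\det X'(t)|\bigr).
\]
Applying Cauchy-Schwarz and the Hadamard-type bound \eqref{zaza}--\eqref{som} produces the sum of products of squared entries $\E_{\mathcal{C}_u}[(\bar X'_{i_j,j})^2]$ in the rotated basis where $t=re_1$. Under $(H_2)$, isotropy of each coordinate gives $\Cov(X_i(t),\bar X'_{i,j}(0))=0$ for $j>1$ (since the covariance is proportional to $t_j$), and together with stationarity this implies $\bar X'_{i,j}(0)$ is independent of $(X_i(0),X_i(t))$, so its conditional second moment is just the unconditional variance, bounded. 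For $j=1$, a coordinate-wise analog of Lemma \ref{resultados}(a) and (e) applied to each scalar isotropic $X_i$ yields
\[
|\E_{\mathcal{C}_u}\bar X'_{i,1}(0)| \leq (Const)\,|u_i|\,\sigma_i(r),
\]
so $\E_{\mathcal{C}_u}[(\bar X'_{i,1})^2] \leq (Const)(1+\|u\|^2)\sigma_{max}^2(r)$. The remainder of the proof of Theorem \ref{t:jma12} carries over verbatim, with $p_{X(0),X(t)}(u,u)\leq (Const)\,r^{-d}$ via \eqref{density} since the Gaussian exponential factor is uniformly bounded. Kac-Rice and passage to polar coordinates then bound the second factorial moment by $(Const)\int_0^a\sigma_{max}^2(r)/r\,dr$.

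\textbf{Necessity (second bullet).} Since $S$ has non-empty interior, it contains a ball, and the Kac-Rice identity gives
\[
\E\bigl(N(u,S)(N(u,S)-1)\bigr) \geq (Const)\int_{B(0,a)} \mathcal{A}(\tau,u)\,p_{X(0),X(\tau)}(u,u)\,d\tau
\]
for some $a>0$. The density satisfies the matching lower bound $p_{X(0),X(\tau)}(u,u) \geq (Const)\,\|\tau\|^{-d}$ by the same identity used in \eqref{density}. The crucial new ingredient is a lower bound $\mathcal{A}(\tau,u)\geq (Const)\,\sigma_{max}^2(\|\tau\|)$. I would prove it by rescaling the first column of $\bar X'(0)$ and $\bar X'(t)$ by $1/\sigma_{max}(r)$, producing matrices $\tilde M_0,\tilde M_t$ with $|\det \bar X'(\cdot)| = \sigma_{max}(r)\,|\det \tilde M_\cdot|$. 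Conditional on $\mathcal{C}_u$, the entries of $(\tilde M_0,\tilde M_t)$ are jointly Gaussian with bounded means (via the coordinate-wise analog of Lemma \ref{resultados}(e)) and bounded variances. By Lemma \ref{l:gos} and the non-degeneracy in $(H_1)$, their joint conditional law converges as $r\to 0$ to a non-degenerate Gaussian limit, so $\E_{\mathcal{C}_u}[|\det\tilde M_0\det\tilde M_t|]\geq c>0$ uniformly for small $r$. Multiplying back by $\sigma_{max}^2(r)$ and integrating in polar coordinates gives the lower bound $(Const)\int_0^a\sigma_{max}^2(r)/r\,dr$, which diverges whenever the Geman-type integral diverges at zero.

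\textbf{Main obstacle.} The most delicate step is the uniform non-degeneracy of the rescaled conditional law of $(\tilde M_0,\tilde M_t)$ as $r\to 0$. The conditioning on $\{X(0)=X(t)=u\}$ forces the first column to shrink like $\sigma_{max}(r)$, and one must verify that after compensating by the rescaling, the joint limit law is still non-singular enough for $\E|\det\tilde M_0\det\tilde M_t|$ to stay bounded away from zero. Independence of coordinates under $(H_2)$ reduces this to a coordinate-wise analysis whose input is the non-degeneracy of $X'(0)$ from $(H_1)$ and the coordinate-wise behaviour of $\sigma_i^2(r)/\sigma_{max}^2(r)$. Once this is in hand, the polar-coordinate reduction is routine because $\sigma_{max}^2(r)$ depends only on $r$ under $(H_2)$.
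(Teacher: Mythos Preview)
Your sufficiency argument is essentially the paper's: the paper also rescales the first column by $\sigma_{max}(r)$ and bounds means via the one-dimensional Lemma \ref{resultados}(e). The only cosmetic difference is that the paper packages the final bound through Lemma \ref{Chichi}(a) (compactness of moments of Gaussian determinants) rather than re-using the Hadamard expansion \eqref{som} directly; both routes give $\mathcal A(t,u)\le(Const)\,\sigma_{max}^2(r)$.

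For the necessity, your outline has the right shape but a genuine gap at exactly the point you flag. You write that after rescaling, the conditional law of $(\tilde M_0,\tilde M_t)$ ``converges as $r\to 0$ to a non-degenerate Gaussian limit.'' Neither half of that sentence is justified. First, the ratios $\tilde\sigma_i(r)=\sigma_i(r)/\sigma_{max}(r)$ need not converge as $r\to 0$, so there is in general no limit law to invoke; a convergence argument via Lemma \ref{l:gos} is not enough. Second, even along a subsequence where the ratios converge, some $\tilde\sigma_i$ may tend to $0$, so the limiting first column can have deterministic coordinates and the limit law is \emph{not} non-degenerate. A purely ``non-degenerate limit'' argument therefore cannot give the uniform lower bound $\E_{\mathcal C_u}[|\det\tilde M_0\det\tilde M_t|]\ge c>0$.

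The paper avoids both pitfalls by working with compactness rather than limits (Lemma \ref{Chichi}(b)). It shows that the mean vector and covariance matrix of the rescaled pair range over an explicit compact set $K_1\times K_2$ (described using Lemma \ref{l:cov}), and then proves that for \emph{every} element of $K_2$, even those with some $\tilde\sigma_i=0$, one has $\P\{\det X=0\}=0$. The key observation is that by construction at least one $\tilde\sigma_i$ equals $1$, and columns $2,\ldots,d$ are fully non-degenerate and independent; a short Grassmannian/absolute-continuity argument then shows that the first column almost surely escapes the $(d-1)$-dimensional span of the others. Continuity plus compactness then yields the uniform positive lower bound. This dimension-counting step is the missing idea in your proposal.
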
 \medskip

 
 Because of stationarity and isotropy  we have 
$$
\Cov(X_i (s), X_i (t)) =  \rho_i(\|s-t\|^2),
$$
where $\rho_i$ is some function of class $C^2(\mathbb R)$.

 Before the proof of the Theorem we state the following lemmas.  
 
 \begin{lemma} \label{Chichi} Let $\mathcal F$  be a family  of Gaussian distributions for 
 $X,Y$ two $d\times d $ Gaussian matrices. Let $Z$  be the $2 d^2$ vector obtained by the elements of $X,Y$  in any order.

 $(a)$  Suppose that  for all distribution in $\mathcal F$, $\E(Z) \in K_1$ and $\Var (Z) \in K_2$  where $K_1,K_2$ are two compacts sets . 

 Then  there exists $C$ such that :
 $$ 
 \sup_{f \in \mathcal F} \E_f(| \det(X) \det(Y)|) \leq C.$$
The constant $C$  depends only on $K_1$, $K_2$ and $d$.
 
  $(b)$ Suppose in addition that   for every $f \in \mathcal F$,   $$\P\{ \det(X) =0\}=0, \P\{ \det(Y) =0\}=0$$
  then 
  there exists $c$ such that :
 $$ 
 \E (| \det(X) \det(Y)|) \geq c.
 $$
 The  positive constant $c$  depends only on   $K_1$, $K_2$ and $d$.
 \end{lemma}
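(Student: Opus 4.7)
The plan for part (a) is to reduce, via Cauchy--Schwarz,
$$\E_f(|\det X \det Y|)\le \sqrt{\E_f((\det X)^2)}\;\sqrt{\E_f((\det Y)^2)},$$
and then observe that $(\det X)^2$ is a polynomial of degree $2d$ in the entries of $Z$. The expectation under a Gaussian distribution of any polynomial in $Z$ is, by Isserlis--Wick, itself a polynomial in the entries of $\E(Z)$ and $\Var(Z)$, and hence a continuous function of $(\mu,\Sigma)$ on $K_1\times K_2$. Compactness then yields a uniform upper bound depending only on $K_1,K_2,d$.

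For part (b) I would argue by continuity and compactness. Set $\phi(\mu,\Sigma):=\E_{\mu,\Sigma}(|\det X \det Y|)$. Weak convergence of Gaussians $(\mu_n,\Sigma_n)\to(\mu,\Sigma)$ gives convergence in distribution of $(X,Y)$, and hence of $|\det X \det Y|$ since the determinant is a continuous function of the matrix entries. Applying the argument of part (a) to the polynomial $(\det X)^2(\det Y)^2$ of degree $4d$ gives a uniform bound on $\E(|\det X\det Y|^2)$ over $K_1\times K_2$, which furnishes the uniform integrability needed to pass to the limit. Hence $\phi$ is continuous on $K_1\times K_2$. The nondegeneracy hypothesis $\P\{\det X=0\}=\P\{\det Y=0\}=0$ forces $\phi(f)>0$ for every $f\in\mathcal F$, and a continuous strictly positive function on a compact set is bounded below by a positive constant.

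The main obstacle lies in (b): for a lower bound depending only on $K_1,K_2,d$, the strict positivity of $\phi$ must hold not merely at each $f\in\mathcal F$, but on the closure of the corresponding parameter set inside $K_1\times K_2$. If $\mathcal F$ is not closed, one could in principle accumulate at some $(\mu_*,\Sigma_*)$ where $\{\det X=0\}$ or $\{\det Y=0\}$ has positive probability, and then $\inf_{\mathcal F}\phi$ could vanish. In the intended applications of the lemma, $\mathcal F$ will be parametrized by a compact range of distances $r$ on which the covariance structure of $Z$ varies continuously and remains in the nondegenerate regime, so the required uniformity is automatic and the compactness--continuity argument goes through.
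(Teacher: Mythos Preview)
Your approach is correct and follows the same overall strategy as the paper: show that $\E(|\det X\det Y|)$ is a continuous function of $(\mu,\Sigma)$ and invoke compactness. The technical routes differ slightly. For (a), the paper writes $Z=\mu+\Sigma^{1/2}\xi$ with $\xi\sim N(0,I_{2d^2})$, uses continuity of $\Sigma\mapsto\Sigma^{1/2}$ in operator norm, and concludes directly that $(\mu,\Sigma)\mapsto\E(|\det X\det Y|)$ is continuous; your Cauchy--Schwarz reduction to $\E((\det X)^2)$ followed by Isserlis--Wick is a bit more explicit, since it avoids the absolute value and lands on a genuine polynomial in $(\mu,\Sigma)$. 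For (b), your uniform-integrability argument makes the continuity step rigorous where the paper is terse, but otherwise the logic is identical: continuity plus strict positivity plus compactness.

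Your concern about (b) is well taken and in fact applies equally to the paper's own proof: the lemma as stated asserts a constant depending only on $K_1,K_2,d$, yet the nondegeneracy hypothesis is imposed only on $\mathcal F$, which could in principle fail to be closed in $K_1\times K_2$. The resolution is exactly what you anticipate: in the paper's application (the second assertion of Theorem~\ref{t:jma}), the compacts $K_1,K_2$ are constructed explicitly and the nondegeneracy $\P\{\det X=0\}=\P\{\det Y=0\}=0$ is verified for \emph{every} $(\mu,\Sigma)\in K_1\times K_2$, not merely for the parameters arising from the process. So the closure issue does not bite in practice, and your reading of the intended use is correct.
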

 For establishing the  next lemma let introduce the following definitions
  $$\sigma^2_i(r)=-2\rho'_i(0)-\frac{4r^2(\rho'_i(r^2))^2}{1-\rho^2_i(r^2)},$$
$$b_i(r)\sigma_i(r)=\big({-2\rho'_i(r^2)-4r^2\rho_i''(r^2)-\frac{4r^2\rho_i(r^2)(\rho'_i(r^2))^2}{1-\rho^2_i(r^2)}}\big).$$

Then we have the following, denoting $\Var_\mathcal C$ the variance-covariance matrix conditional to $\mathcal C$.

\begin{lemma}\label{l:cov}[see \cite{AW} p. 336]
 
$${\small\Var_\mathcal C \big(X'_i(0); X'_i(r e_1)\big)} ={\tiny\begin{bmatrix}\sigma_i(r)&0&\ldots&0&b_i(r)\sigma_i(r)&0&\ldots&0\\
0&-2\rho'_i(0)&\ldots&0&0&0&\ldots&0\\
\ldots&\ldots&\ldots&\ldots&\ldots&\ldots&\ldots&\ldots\\
0&0&\ldots&-2\rho'_i(0)&0&0&\ldots&0\\
b_i(r)\sigma_i(r)&0&\ldots&0&\sigma_i(r)&0&\ldots&0\\
0&0&\ldots&0&0&-2\rho'_i(0)&\ldots&0\\
\ldots&\ldots&\ldots&\ldots&\ldots&\ldots&\ldots&\ldots\\ 
0&0&\ldots&0&0&0&\ldots&-2\rho'_i(0)\end{bmatrix}}.$$   
\end{lemma}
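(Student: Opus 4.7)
The plan is to derive the announced block structure of $\Var_\mathcal{C}(X'_i(0);X'_i(re_1))$ by direct differentiation of the covariance function of $X_i$ and one application of the Gaussian conditional-variance formula.

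First, I would exploit hypothesis $(H_2)$: since the coordinates $X_1,\ldots,X_d$ of $X$ are independent, the conditioning event $\mathcal{C}=\{X(0)=X(re_1)=u\}$ factorises coordinate-wise, so the conditional distribution of $(X'_i(0);X'_i(re_1))$ depends only on the scalar pair $(X_i(0),X_i(re_1))$. Because Gaussian conditional variances do not depend on the conditioning value, one may replace $\mathcal{C}$ by the centred event $\{X_i(0)=X_i(re_1)=0\}$.

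Next, isotropy yields $\E[X_i(s)X_i(t)]=\rho_i(\|s-t\|^2)$, and a direct chain-rule computation produces the joint covariance structure of $(X_i(0),X_i(re_1),X'_i(0),X'_i(re_1))$. Evaluating at $s=0$, $t=re_1$ one obtains $\Cov(X'_i(0)_j,X_i(re_1))=-2r\rho'_i(r^2)\delta_{j1}$, $\Cov(X'_i(0)_j,X_i(0))=0$, and $\Cov(X'_i(0)_j,X'_i(re_1)_k)=-2\delta_{jk}\rho'_i(r^2)-4r^2\delta_{j1}\delta_{k1}\rho''_i(r^2)$, with the analogous relations at the base point $re_1$. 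The crucial consequence of isotropy is that for $j\neq 1$ the perpendicular components $X'_i(0)_j$ and $X'_i(re_1)_j$ are uncorrelated with both $X_i(0)$ and $X_i(re_1)$, so the Gaussian conditioning cannot affect their joint law with the rest of the vector.

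Finally, I would apply the Gaussian regression identity $\Var_\mathcal{C}(Y)=\Var(Y)-\Cov(Y,Z)\Var(Z)^{-1}\Cov(Z,Y)$ with $Y=(X'_i(0);X'_i(re_1))$ and $Z=(X_i(0);X_i(re_1))$. By the vanishing noted above, the correction term only modifies the $2\times 2$ sub-block indexed by the first-direction derivatives $X'_i(0)_1$ and $X'_i(re_1)_1$. Inverting the $2\times 2$ matrix $\Var(Z)$, whose entries are $1$ on the diagonal and $\rho_i(r^2)$ off-diagonal, and carrying out the subtraction yields precisely the values $\sigma^2_i(r)$ on its diagonal and $b_i(r)\sigma_i(r)$ off the diagonal, in agreement with the definitions stated just before the lemma; the remaining diagonal entries retain their unconditional value $-2\rho'_i(0)$.

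The only obstacle is the bookkeeping of the partial derivatives of $\rho_i(\|\tau\|^2)$ and their evaluation at $\tau=-re_1$; once that is in place, the block structure is forced by isotropy. Since the identity is classical and already tabulated on p.~336 of \cite{AW}, the shortest presentation is either to refer the reader to that source or to collect the short computation in the appendix alongside the proofs of the other lemmas.
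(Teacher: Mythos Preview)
The paper does not actually prove this lemma: it is stated with the bracket ``[see \cite{AW} p.~336]'' and no argument is given, either in the main text or in the appendix. Your proposal --- reduce to a single coordinate via $(H_2)$, differentiate $\rho_i(\|s-t\|^2)$ to obtain the joint covariances, and apply the regression formula $\Var_\mathcal{C}(Y)=\Var(Y)-\Cov(Y,Z)\Var(Z)^{-1}\Cov(Z,Y)$ --- is exactly the standard derivation behind that reference, so you are supplying what the paper omits. Your final remark, that the shortest option is simply to cite \cite{AW}, is in fact precisely what the authors do.

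One point worth tightening: your own covariance computation gives $\Cov\big(X'_i(0)_j,X'_i(re_1)_j\big)=-2\rho'_i(r^2)$ for $j\neq 1$, and since these perpendicular derivatives are uncorrelated with $(X_i(0),X_i(re_1))$ this value survives conditioning. The displayed matrix in the statement shows a $0$ in those cross positions, which appears to be a typo for $-2\rho'_i(r^2)$ (and similarly $\sigma_i(r)$ on the diagonal should read $\sigma_i^2(r)$). This does not affect how the lemma is used in the proof of Theorem~\ref{t:jma} --- only boundedness of those entries and the exact form of the $(1,1)$, $(1,d{+}1)$, $(d{+}1,1)$, $(d{+}1,d{+}1)$ sub-block matter there --- but if you write out the computation you should state the entries your calculation actually produces.
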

 \begin{proof}[Proof  of the Theorem \ref{t:jma}]
 
 We begin by considering  instead of (\ref{atu}) the quantity 
  \begin{equation} \label{atu}
 \mathcal A(t,u)=\E_\mathcal C\big(|\det X'(0)\det X'(t)|),
\end{equation}
where now
$$\mathcal C=\{X(0)=X(t)=u\}.$$

Because  of isotropy we can assume, without loss of generality,  that  $ t = r e_1$. Because of the independence of each coordinates assumed in \eqref{H2}
$$
\E_\mathcal C(X'_{i,1}(0)) = \E(X'_{i,1}(0) \big |\, X_i(0) = X_i(r e_1) =u_i).
$$

So we have to consider a one dimensional problem as in Section \ref{Geman1}.
 In addition  the spectral measure  of each $X_i $  is invariant 
by isometry   so its projection on the first axis cannot be reduced to  one point (or two taking into account symmetry). As a consequence
Lemma \ref{resultados} $(e)$ holds true  implying that 
 $$
 | \E_\mathcal C(X'_{i,1}(0)  | \leq (Const) u_i \sigma_i(r) .
$$

Let us consider now $E_\mathcal C(X'_{i,j}) =  \E\big(X'_{i,j} \big |\,  X_i(0) = u_i,  \frac{X_i(t) -X_i(0)} {r} =0)$ for $j \neq 1$.
From Lemma \ref{l:gos} 
$$E_\mathcal C(X'_{i,j}) \simeq \E\big(X'_{i,j} \big |\,  X_i(0) = u_i, X'_{i,1}(0) =0\big).
$$
By independence
$$
\E_\mathcal C(X'_{i,j}) \simeq \E\big(X'_{i,j} \big |\,X'_{i,1}(0)  = 0 ) = 0.
$$
Of course we have the same  kind of result for $X'(r e_1)$.

 So, if we divide  the first column of $X'(0)$ and $X'(r e_1)$ by $\sigma_{max}(r)$  to obtain $ \tilde X'(0)$ and $ \tilde X'(r e_1)$,    Lemma   \ref{l:cov}
 implies  that 
 all the terms  of the variance-covariance matrix are bounded, the expectation is bounded. Using lemma \ref{Chichi} we get that 
 \begin{equation}\label{e:majo}
 \mathcal A(t,u) \leq (Const)  \sigma_{max}^2(r).
 \end{equation}
 The end  of  the proof of the first assertion is  similar to the one of Theorem \ref{t:jma12}. \medskip

 
We turn now to the proof   of the second assertion.  
To get the inequality on the other direction we must  carefully apply (b) of Lemma \ref{Chichi}. For this purpose we need to describe the compact sets $K_1$ and $K_2$.

We know, from the proof of the first assertion,  that all the expectations are bounded so $K_1$ is just $[-a,a]^{2d^2\times2d^2}$ for some $a$.
For  the domain $K_2$ of the variance-covariance matrix of   $\tilde X' (0),\tilde X' (t) $ : 
 \begin{itemize} 
 \item First we have an independence between the coordinates $X_i$. Denoting by 
 $X'_{i,:} (t)$ the $i$ row of $X'$ i.e. the gradient of $X_i$ at $t$, then the variables $ \big(X'_{i,:} (0),X'_{i,:} (t) \big), 
 i=1,\ldots,d$ are independent.
 \item If we consider $ \big(X'_{i,:} (0),X'_{i,:} (t) \big)$ for some fixed $i$, we see from lemma \ref{l:cov} that (i) only 
 one variance varies  : $\sigma_i(r)$,  (ii)  the only non-zero covariance is between $X'_{i,1} (0)$ and $X'_{i,1} (t)$.
 
 \item  After dividing  $X'_{i,1} (0)$ and $X'_{i,1} (t)$ by $\sigma_{max}(r)$  to obtain $   \tilde X'_{i,1} (0),\tilde X'_{i,1} (t) $
  the variance becomes
$$\tilde\sigma_i(r):=\frac{\sigma_i(r)}{\sigma_{\max}(r)}.$$
The domain for these variance, when $i$ varies  is 
$$K'_{2}:=\{\tilde\sigma(r) \in \R^d  \,:\,0\le\tilde\sigma_i(r)\le1,\,\mbox{ at least one }\tilde\sigma_i(r)=1\}.
$$
That is  a compact set.
 \item The domain for the covariance between  $   \tilde X'_{i,1} (0)$ and $ \tilde X'_{i,1} (t) $ 
 is given by Cauchy-Schwarz inequality : 
$$
\Cov\big(  \tilde X'_{i,1} (0),  \tilde X'_{i,1} (t) \big)  \leq \tilde\sigma_i(r),$$
that defines another compact set. 

\item  The other variables are independent  between them and independent of the variables above. Their variance are fixed.
\end{itemize} \medskip
It remains to prove that for every element of $K_1$  and $K_2$ the Gaussian distribution satisfies 
$$\mathbb P\{\det (X)=0\}=0\mbox{ and }  \mathbb P\{\det (Y)=0\}=0,$$
where $X,Y$  is a representation of the  conditional distribution  of $\tilde X'(0), \tilde X'(t)$. 
It is sufficient to study the case of $\det(X)$. Recall that  we have proved above that all the coordinates of $X$ are independent. The only difficulty is that  the variance  of the first column may vanish.

Let us consider the $d\times(d-1)$ matrix $X'_{:,-1}$ that consists of columns $2,\ldots,d$ of $X$. Because all the entries  of $X_{:,-1}$ are independent  they span a subspace of dimension $d-1$ a.s. 

%

Then the rank of $X'_{:,-1}$ is almost surely $(d-1)$ or $\mbox{Im}(X'_{:,-1})$ is a $(d-1)$ space. The distribution of the random matrix $ Y:=X'_{:,-1}$ has a density that can be written 

$$
f_Y[dY]=(Const) e^{-\frac12\mbox{Trace}(Y^\top\Sigma^{-1}Y)}[dY],$$ where as before we have denoted $\Sigma$ the covariance matrix of each column vector, which is diagonal. This density function is translated on the Grassmannian giving a bounded density  with respect to the  Haar measure.


Recall that  we have proved above that all the coordinates of $X$ are independent.
Let $X_{:,1}$ be the first column of $X$.
Conditioning on $X_{:,-1}$, by independence, the distribution of $X_{:,1}$ remains unchanged. A representation for this random variable is
$$X_{:,1}=\mu+\xi,\mbox{ with }\mu=\E(X_{:,1}),$$ where $\xi$ (because some $\tilde \sigma_i$ can vanish) has an absolute continuous distribution on the space 
$$S_I=(\xi_i=0,\mbox{ for } i\in I),\mbox{ with } I=\{i:\, \tilde\sigma_i=0\}.
$$
But  since  at least  one $\tilde \sigma_i=1$, we have $S_I\neq\{1,\ldots,d\}$.

Because of its  absolute continuity, almost surely, $ \xi$ can not be included in a given subspace $E$ that does not contain $S_I$.  In conclusion, given its absolutely continuous distribution over the Grassmannian, with probability one, $\mbox{Im}(X_{:,-1})$ cannot contain any  fixed affine space.


As a consequence we can apply Lemma  \ref{Chichi} (b) for getting the inequality in the other direction.

It remains to give a lower bound to the density.
$$p_{X(0),X(t)}(u,u)=\prod_{i=1}^d\frac1{2\pi}\frac1{\sqrt{1-\rho^2_i(r^2)}}e^{-\frac{u^2}{1-\rho_i(r^2)}}.$$
Since $\rho_i(r^2)\to1$ as $r\to0$ the term $e^{-\frac{u^2}{1-\rho_i(r^2)}}$ is lower bounded. Then is suffices to use (\ref{density}).

 \end{proof}

 %

 \subsection{Critical points} \label{criti}
Let $ Y : Y(t)$  be a random field  from $\R^d \to \R$. Critical points  points  of $ Y$ are in fact  zeros of $X= X'(t)$. 
  Strictly speaking  this process  does not satisfies  the hypotheses  of Theorem \ref{t:jma2} because $X'(t)$ is the Hessian of  $Y(t)$  so it is symmetric  and its distribution  in $\R^{d^2}$ is not N.D. But the result holds true  with a very similar proof mutatis mutandis.
  
  \begin{theorem}\label{t:jma2} Suppose    that 
  \begin{itemize} 
  \item$Y$ is Gaussian stationary, centred  and  has  $ C^2$ sample paths. 
  \item  $  Y'(t)$  is N.D., $Y''(t)$ has a non degenerated  distribution in the space  of symmetric matrices of  dimension $d\times d$.
  \end{itemize}
  Define 
  $$
\bar S^2_{max} (r) := \max_{i =1,\ldots,d}\max_{\lambda\in \mathbb S^{d-1}}    \Var\big(Y''_{i\lambda}\big |\, Y'(0), Y''(\lambda r)\big),\\
$$
   if $$
  \int \frac{  \bar S^2_{max} (r)}{r} dr  \mbox{ converges at } 0,
 $$
 then for all compact $S \subset  \R^d $ :   the second moment of the number of critical points  of $Y$  included in $S$ is finite. 
 \end{theorem}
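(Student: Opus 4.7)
The plan is to run the proof of Theorem \ref{t:jma12} with the gradient field $X := Y'$ in the role of $X$, so that critical points of $Y$ are zeros of $X : \R^d \to \R^d$, and $\bar S^2_{max}(r)$ is exactly $\sigma^2_{max}(r)$ for this $X$. The only subtlety, as the author notes, is that $X'(t) = Y''(t)$ is valued in the space of symmetric $d \times d$ matrices and so is not N.D.\ in $\R^{d^2}$; the second hypothesis ensures it is however N.D.\ within the symmetric subspace, which turns out to be all that the proof needs.

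First I would apply Kac-Rice for the second factorial moment of $N(0,S)$:
\[
\E(N(0,S)(N(0,S)-1)) = \int_{S\times S} \mathcal A(t-s, 0)\, p_{X(s),X(t)}(0,0)\, ds\, dt,
\]
with $\mathcal A(t,0) = \E_\mathcal C(|\det Y''(0)\det Y''(t)|)$ and $\mathcal C = \{Y'(0) = Y'(t) = 0\}$. Cauchy-Schwarz together with the symmetry between $0$ and $t$ bounds $\mathcal A(t,0)$ by $\E_\mathcal C(\det((Y''(0))^\top Y''(0)))$, and after rotating so that $t = re_1$ the Gram inequality \eqref{zaza} reduces the estimate to a finite sum of terms of the form $\E_\mathcal C\bigl(\prod_{j=1}^d (Y''_{i_j, j}(0))^2\bigr)$, exactly as in \eqref{som}. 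As in \eqref{sc1} and \eqref{sc2}, the columns $j > 1$ contribute $O(1)$ by contractivity of the conditional expectation and stationarity, while for the first column a further Cauchy-Schwarz and the definition of $\bar S^2_{max}(r)$ (after rewriting the conditioning $\{Y'(0)=0,\, Y'(re_1)=0\}$ as $\{Y'(0)=0,\, (Y'(re_1)-Y'(0))/r = 0\}$ and invoking Lemma \ref{l:gos}) gives a bound of $(Const)\,\bar S^2_{max}(r)$. The density factor is controlled by the same determinantal identity as in \eqref{density}, yielding $p_{X(0),X(t)}(0,0) \asymp r^{-d}$.

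Combining these bounds and passing to polar coordinates inside a ball of radius $a$ containing $S$ produces
\[
\E(N(0,S)(N(0,S)-1)) \le (Const)\,|S| \int_0^a \frac{\bar S^2_{max}(r)}{r}\, dr,
\]
which is finite by hypothesis; finiteness of $\E(N(0,S))$ itself follows from the Kac-Rice formula and the finiteness of $\E|\det Y''(0)|$, so finiteness of the factorial moment upgrades to finiteness of the second moment. The main obstacle is checking that Lemma \ref{l:gos} does apply despite the degeneracy of $Y''$ in $\R^{d^2}$: what the lemma requires is non-degeneracy of the limiting variable $(Y'(0),\, Y''(0) e_1) \in \R^{2d}$. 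This holds because $Y'(0)$ and $Y''(0)$ are independent for stationary Gaussian fields (the relevant third derivatives of the covariance vanish at the origin), $Y'(0)$ is N.D.\ by hypothesis, and $Y''(0) e_1$ is a selection of $d$ entries of the symmetric Hessian, so its covariance is a principal submatrix of the positive definite covariance of the Hessian in the symmetric-matrix space and is therefore itself positive definite. Once this point is settled the argument proceeds \emph{mutatis mutandis} from the proof of Theorem \ref{t:jma12}.
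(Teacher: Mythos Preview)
Your proposal is correct and follows exactly the route the paper indicates: the paper itself does not give a separate proof of Theorem~\ref{t:jma2}, merely remarking that the argument of Theorem~\ref{t:jma12} carries over \emph{mutatis mutandis} once one observes that the Hessian, while degenerate in $\R^{d^2}$, is non-degenerate in the symmetric subspace. You have filled in precisely those details, including the key verification that the limiting vector $(Y'(0),\,Y''(0)e_1)\in\R^{2d}$ is N.D., which is what makes Lemma~\ref{l:gos} and hence the density estimate \eqref{density} go through.
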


 \section{Random fields from $\R^D$ to $\R^d$, $d<D$} \label{Dd}
 
  In this section  we study  the level sets  of a random field  $\R^D$ to $\R^d$. Of course the case
 $d>D$   has no interest  because  almost surely  the  level set is empty. The  case $d=D$ has been considered in the preceding sections so we assume    $d<D$.  The result  presented here is, in some sense, a by-product of Theorem  \ref{t:jma}, but by its simplicity  it is the most surprising result and one of the main results  of this paper.

  \begin{theorem}\label{t:jma3}
   Let $X:X(t)$ a stationary  random field $\R^D$ to $\R^d$, $d<D$ with $C^1$ paths. By the implicit function   theorem, a.s.   for every $u$  the level set $C_u$ is a manifold and its $D-d$ dimensional measure $\sigma_{D-d}$ is well defined. Let $C(u,K)$ be   the restriction of $C_u$ to a compact set $K \subset \R^D$. Assume that the distributions of $X(t)$ and $X'(t)$ are N.D. \\
   Then for every $u$ and $K$ 
   \begin{equation}\label{dD}
   \E \big(\sigma^2_{D-d}C(u,K)) < + \infty.
   \end{equation}
  \end{theorem}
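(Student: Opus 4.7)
The plan is to mimic the strategy of Theorems \ref{t:jma12} and \ref{t:jma}, working from the Kac--Rice formula for the second moment of the $(D-d)$-dimensional measure of the level set (cf. \cite{AW}, Theorem 6.3):
\begin{equation*}
\E\bigl(\sigma_{D-d}^2\, C(u,K)\bigr) = \int_{K\times K} \mathcal A(s,t,u)\, p_{X(s),X(t)}(u,u)\, ds\, dt,
\end{equation*}
where $X'(s)$ denotes the $d\times D$ Jacobian and
$\mathcal A(s,t,u) := \E_\mathcal{C}\bigl[(\det X'(s)X'(s)^\top)^{1/2}(\det X'(t)X'(t)^\top)^{1/2}\bigr]$ with $\mathcal C = \{X(s)=X(t)=u\}$. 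Only the diagonal $s=t$ can spoil finiteness; off-diagonally the integrand is continuous and bounded on the compact $K\times K$, so the task reduces to measuring the strength of the singularity as $r := \|t-s\| \to 0$.

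For the density I would repeat the argument leading to \eqref{density}: replacing $X(t)$ by $(X(t)-X(s))/r$ leaves the determinant unchanged, and Lemma \ref{l:gos} identifies the limit of the renormalised covariance matrix, so $p_{X(s),X(t)}(u,u)\le (Const)\, r^{-d}$ with the constant depending only on $u$. For $\mathcal A(s,t,u)$, by Cauchy--Schwarz and the symmetry between $s$ and $t$ it is enough to bound $\E_\mathcal C[\det X'(s)X'(s)^\top]$. Applying the Hadamard-type estimate \eqref{zaza} to $M=X'(s)X'(s)^\top$ gives
\begin{equation*}
\det\bigl(X'(s)X'(s)^\top\bigr) \le \prod_{i=1}^d \|X'_{i,:}(s)\|^2 = \sum_{j_1,\ldots,j_d=1}^{D} \prod_{i=1}^d (X'_{i,j_i}(s))^2,
\end{equation*}
so it suffices to bound each $\E_\mathcal C[(X'_{i,j}(s))^2] = \Var_\mathcal C(X'_{i,j}(s)) + (\E_\mathcal C X'_{i,j}(s))^2$. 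The first summand is dominated by the unconditional variance (Gaussian conditioning is contractive); the second, by Lemma \ref{l:gos} applied to the renormalised conditioning $(X(s), (X(t)-X(s))/r)$, converges as $r\to 0$ to a bounded linear function of $u$, hence stays uniformly bounded on $K\times K$. Thus $\mathcal A(s,t,u)\le C(u,K)$ uniformly.

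Plugging both bounds back and passing to polar coordinates around $s$ would yield
\begin{equation*}
\E\bigl(\sigma_{D-d}^2\, C(u,K)\bigr) \le C(u,K)\,|K|\int_0^{\mathrm{diam}(K)} r^{D-1}\, r^{-d}\, dr = C(u,K)\,|K|\int_0^{\mathrm{diam}(K)} r^{D-d-1}\, dr,
\end{equation*}
which is finite since $D-d-1>-1$. This last observation also explains the striking absence of any Geman-type condition: the $D-d$ extra tangential directions produce a surplus factor $r^{D-d-1}$ coming from polar coordinates in the $D$-dimensional ambient space, which absorbs the singularity of the joint density for free. The main technical nuisance I anticipate is making the bound on the conditional mean of $X'_{i,j}(s)$ genuinely uniform in $(s,t)\in K\times K$: for $u=0$ this is immediate, but for general $u$ one must carefully apply Lemma \ref{l:gos} to ensure that the regression coefficients remain bounded as the covariance matrix of $(X(s),X(t))$ becomes near-singular at $r=0$; everything else in the argument is bookkeeping.
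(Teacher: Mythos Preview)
Your proposal is correct and follows essentially the same route as the paper's proof: Kac--Rice formula, the density bound $p_{X(s),X(t)}(u,u)\le (Const)\,r^{-d}$ via Lemma~\ref{l:gos}, Cauchy--Schwarz plus symmetry to reduce $\mathcal A$ to $\E_\mathcal C[\det(X'(s)X'(s)^\top)]$, the Hadamard-type bound \eqref{zaza}, boundedness of conditional means and variances, and finally the integrability of $r^{-d}$ in $\R^D$. The only small looseness is the phrase ``it suffices to bound each $\E_\mathcal C[(X'_{i,j}(s))^2]$'': the conditional expectation of the product is not literally the product of these, but since bounded Gaussian mean and variance give bounded higher moments, one H\"older or repeated Cauchy--Schwarz step (as in \eqref{sc1}--\eqref{sc2}) closes this immediately.
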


\begin{proof}

The Kac-Rice formula reads
\begin{multline*} \E \big(\sigma^2_{D-d}C(u,K)) \\
= \int_{K^2}
\E_\mathcal C\big(\big(\det (X'(s)X'(s)^\top)\det (X'(t)X'(t)^\top)\big)^{\frac12}|\,|\,\big)\\ 
p_{X(s),X(t)} (u,u)  ds dt,
\end{multline*}
 where again $\E_\mathcal C$ denotes the  expectation conditional to $\mathcal C = \{X(0)=X(t)=u\}$.  
Using the arguments in the proof of Theorem \ref{t:jma},  we have
$$
p_{X(0),X(t)} (u,u)   \leq (Const)  \|t\|^{-d} .
$$
By Cauchy-Schwarz  inequality and symmetry
$$A(t,u):
 =\E_\mathcal C\left(\big(\det (X'(0)X'(0)^\top)\det (X'(t)X'(t)^\top)\big)^{\frac12}| \right)$$
 $$
  \leq  \E_\mathcal C\left(\big(\det (X'(0)X'(0)^\top)|\big)\right).
 $$
 Using \eqref{zaza} we have to bound 
 $$
  \E_\mathcal C \prod_{i=1} ^d \|  \nabla X_i(0)\|^2.
 $$
 Now  we borrow results from the proof of Theorem  \ref{t:jma}, to get that for every $i$:
  \begin{align*}
  \E_\mathcal C(  X'_{i,1}(0))  &  \to 0 \\
  \E_\mathcal C( X'_{i,j}(0))  &  \mbox{ is bounded } \quad j \neq 1,
  \end{align*}
  Because of the contracting property  of the conditional expectation
   $ \Var_\mathcal C( X'_{i,j}(0))$ is bounded. So, it follows directly that $\mathcal A(t,u)$ is upper-bounded.
  The integrability  of $t^{-d}$ in $\R^D$ gives the result. 
 \end{proof}
 

\section{Appendix}
 \begin{proof}[Proof of lemma \ref{integrales}]
Let us consider the integral
$$ \int_0^\delta\frac{\sigma^2(\tau)}{\tau}d\tau.$$
For $\tau$ small enough
$$\frac{\sigma^2(\tau)}{\tau}\sim(\frac1{\lambda_2})^{\frac32}\frac{\lambda_2(1-r^2(\tau))-(r'(\tau))^2}{\tau^3}.$$
Then integrating by parts\\
\\
$\displaystyle \int_0^{\delta}\frac{\lambda_2(1-r^2(\tau))-(r'(\tau))^2}{\tau^3}d\tau$
$$=\frac{\lambda_2(1-r^2(\delta))-(r'(\delta))^2}{2\delta^2}+\int_0^\delta \frac{r'(\tau)}\tau (\frac{-\lambda_2r(\tau)-r''(\tau)}\tau) d\tau.$$Hence we need to consider the second term that is equal to
$$-\lambda_2 \int_0^\delta\frac{r'(\tau)}\tau(\frac{r(\tau)-1}{\tau})d\tau-\int_0^\delta\frac{r'(\tau)}\tau(\frac{r''(\tau)+\lambda_2}{\tau})d\tau,$$as the first term is evidently convergent, the above sum 
is convergent if  and only if 
$$\int_0^{\delta}\frac{r''(\tau)+\lambda_2}{\tau}d\tau<\infty.$$
\end{proof}

\begin{proof}[Proof of lemma \ref{resultados}] For short we will write $r, r', r''$ instead of $r(\tau), r'(\tau), r''(\tau)$. Items $(a)-(d)$ are easy consequences of regression formulas (see  \cite{AW} page 100 for example). 

To prove (e)  we study first the behavior of 
$\frac{r'}{\sigma(\tau)}$ near to zero. We need consider two  cases.

The first one is when  the fourth spectral moment $ \lambda_4$ is finite: we have $r(t) = 1-\lambda_2 t^2/2 + \lambda_4 t^4/(4!) + o(t^4)$ . By using a Taylor expansion  of fourth order  on the numerator and the denominator  of the fraction  
$(\frac{r' u}{(1+r)\sigma(\tau)})^2,$ we obtain 
$$(\frac{r' u}{(1+r)\sigma(\tau)})^2\to\frac{\lambda_2^2u^2}{\lambda_4-\lambda_2^2}\le (Const)\,u^2, \quad \mbox{ giving  {\em (e)}}.$$ \medskip

Consider now the second case : $\lambda_4=+\infty$. \\
Given that 
$r''(\tau)-r''(0)=\int_0^\infty\frac{1-\cos(\tau\lambda)}{\tau^2 \lambda^2/2}d\mu(\lambda)$, we have by Fatou's lemma
\begin{align} \label{e:jma}
\liminf_{\tau\to0}\frac{r''(\tau)-r''(0))}{\tau^2}& \ge\int_0^{+\infty}\liminf_{\tau\to0}\frac{1-\cos(\tau\lambda)}{\tau^2 \lambda^2/2} \lambda^4d\mu(\lambda)   \notag \\
&=\int_0^\infty\lambda^4d\mu(\lambda)=+\infty.
\end{align}
Since $1+r$ tends to $2$, it suffices to study the behaviour of  
$$\frac{r'^2}{\sigma^2(\tau)}\simeq \frac{\lambda_2^3}{\frac{\lambda_2(1-r^2)-r'^2}{\tau^4}}\, ,$$
Note that 
$\lambda_2(1-r^2(\tau))-r'^2(\tau)= 2\,\lambda_2(1-r(\tau))-r'^2(\tau)+O(\tau^4)$. Furthermore by using the l'Hospital rule
$$\lim_{\tau\to0}\frac{2\,\lambda_2(1-r(\tau))-r'^2(\tau)}{\tau^4}= \lim_{\tau\to0} \left({\frac{-r'(\tau)}{2\tau}}\right)\left({\frac{r"(\tau)-r"(0))}{\tau^2}}\right)=+\infty,$$
 because of \eqref{e:jma} and since we know that $\frac{-r'(\tau)}{2\tau}\to \frac{\lambda_2}{2}$.
Thus $\frac{r'(\tau)}{\sigma(\tau)}\to 0$. These two results imply that  {\em(e)} holds.
\end{proof}

\begin{proof}[Proof of the lemma \ref{acotacion}] We can write
$$Y_2-m_2=\rho (Y_1-m_1)+\sqrt{1-\rho^2}Z_1,
$$
where  $Z_1$ is a standard Gaussian independent of $Y_1$.
Thus\\
\\
$\displaystyle Y_1Y_2=(m_1m_2+\rho)+(m_2+\rho m_1)(Y_1-m_1)$
$$+m_1\sqrt{1-\rho^2}Z_1+\rho ((Y_1-m_1)^2-1)+\sqrt{1-\rho^2}(Y_1-m_1)Z_1.
$$
This formula yields that $\E|Y_1Y_2|$ is a continuous function of $(m_1,m_2,\rho)$ and by compactness is upper bounded. 

In the other direction, setting $Y_i= m_i + \bar Y_i$, $i=1,2$  we have 
$$ 
Y_1 Y_2 = [m_1 m_2 ]+[ m_1 \bar Y_2 +   m_2 \bar Y_1] + [ \bar Y_1\bar Y_2  ],
$$
Where the three brackets  are in different It\^o-Wiener chaos and thus  the joint variance is the sum of the variance of each term. This implies that 
$$
\Var(Y_1 Y_2)  \geq \Var( \bar Y_1 \bar Y_2) = \E ( \bar Y_1^2 \bar Y_2^2)= 
 \E \Big( \big(H_2 (\bar Y_1) +1\big) \big(H_2 (\bar Y_2) +1\big) \Big) = 1 +2 \rho^2,
$$  
the function  $H_2(y)  = y^2-1 $ is the second Hermite polynomial and also we have used  the Mehler formula:
$\E( H_i(\bar Y_1) H_j(\bar Y_2 ) = \delta_{ij} \rho ^i i!$. Note that $H_0(y)  = 1 $. 

As a consequence 
$$\Var(Y_1Y_2)\ge 1.$$ and  $Y_1Y_2$ is never a.s. constant. Thus $\E|Y_1Y_2|>0$ and by compactness is lower bounded.
\end{proof}

 \begin{proof} [Proof of lemma \ref{l:gos}]
 Note that for $n$  large enough,  the distribution of $ Z_n$ is  N.D. As a consequence
 \begin{align*}
 \E(T|\,Z_n=z)&=\Cov(T,Z_n)(\Var(Z_n))^{-1}z \\
 \Var(T|Z_n) &= \Var(T)- \Cov(T,Z_n)(\Var(Z_n))^{-1}\Cov(Z_n,T)
 \end{align*}
 but
 $$\Var(Z_n)\to\Var(Z),\mbox{ N.D.}$$This implies 
 $(      \Var(Z_n)    )^{-1}\to(\Var(Z))^{-1}).$ The rest is plain.
 \end{proof}

 \begin{proof}[Proof of lemma \ref{Chichi}]

Let $\Sigma$ be the variance-covariance matrix  of $Z$  and let $\mu $ be  its expectation. Both vary  in a compact sets $K_1,K_2$.  Let $\Sigma^\frac 1 2$ be the square root of $\Sigma$  defined in the spectral way. Using operator norm \cite{Far:Nik}, it is easy to prove  that $\Sigma^\frac 1 2$  is a (uniformly) continuous  function of $\Sigma$. The random vector $Z$ admits the following représentation
$$
Z = \mu  + \Sigma^\frac 1 2 \xi \quad \xi \sim  N(0,I_{2d^2}).
$$ 
 The function $\det(X) \det(Y)$, as a polynomial, is a continuous function  of $Z$  and by consequence 
 $ \E\big(|\det(X) \det(Y)|\big)$ is a continuous function of $ \mu,\Sigma$.  The first assertion follows by compactness. \medskip

 In the other direction  we have by additivity
 $$\P\{ \det(X)=0\} =0, \quad \P\{ \det(Y)=0\} =0.
 $$
  This implies that 
  $$
  \E(|\det(X)\det(Y)|) >0.
  $$
  Again the second  inequality is obtained by compactness.
 \end{proof}

\end{document}